\date{}
\theoremstyle{plain}
\newtheorem{theorem}{Theorem}[section]
\newtheorem{lemma}[theorem]{Lemma}
\theoremstyle{definition}
\newtheorem{remark}[theorem]{Remark}
\numberwithin{equation}{section}
\numberwithin{theorem}{section}
\begin{document}

\title[Some uniqueness results]{Some uniqueness results in quasilinear subhomogeneous problems}
\vspace{1cm}

\author{Humberto Ramos Quoirin}
\address{H. Ramos Quoirin \newline CIEM-FaMAF, Universidad Nacional de C\'{o}rdoba, (5000) C\'{o}rdoba, Argentina.}
\email{\tt humbertorq@gmail.com}

\subjclass{35J20, 35J62, 35J92} \keywords{uniqueness,quasilinear, 
	subhomogeneous}

\maketitle

\begin{abstract}
We establish uniqueness results for  quasilinear elliptic problems through the criterion recently provided in \cite{DFMST}. We apply it to generalized $p$-Laplacian subhomogeneous problems that may admit multiple nontrivial nonnegative solutions. Based on a generalized hidden convexity result, we show that uniqueness holds among strongly positive solutions and nonnegative global minimizers. Problems involving nonhomogeneous operators as the so-called $(p,r)$-Laplacian are also treated.\\
\end{abstract}

\bigskip
\section{Introduction and main results}
\bigskip

A rather general criterion has been recently formulated in \cite{DFMST} to prove uniqueness results for positive critical points of a given functional. Roughly speaking, the authors take advantage of a basic convexity principle, namely, the fact that if $f:[0,1] \to \mathbb{R}$ is differentiable and strictly convex then it has at most one critical point. This principle is then applied to $f:=I \circ \gamma$, where $I$ is a given functional and $\gamma$ is a path connecting two hypotethical critical points of $I$.  This uniqueness criterion finds one of its several applications in the generalized $p$-Laplacian problem 
\[
\left\{
\begin{array}
[c]{lll}%
-\mbox{div}\left(h(|\nabla u|^p)|\nabla u|^{p-2}\nabla u\right)=g(x,u) & \mathrm{in} & \Omega,\\
u=0 & \mathrm{on} & \partial\Omega,
\end{array}
\right.  \leqno{(P)}
\]
which is the Euler-Lagrange equation associated to the functional
\begin{equation}\label{ei}
I(u):=\int_{\Omega} \left(\frac{1}{p}H(|\nabla u|^{p})-G(x,u)\right)  ,
\end{equation}
acting in the Sobolev space $W_0^{1,p}(\Omega)$. Here $p \in (1,\infty)$, $\Omega$ is a bounded and smooth domain of $\mathbb{R}^{N}$ ($N\geq1$), 
 $H(t):=\int_0^t h(s) ds$, and $G(x,t):=\int_0^t g(x,s) ds$, where\\
\begin{enumerate}
	\item[(H1)] $h:[0,\infty) \to [0,\infty)$ is continuous, bounded and nondecreasing,\\
\end{enumerate}
and $g:\Omega \times \mathbb{R} \to \mathbb{R}$ is continuous and satisfies\\
\begin{enumerate}
		\item[(A1)]  $|g(x,t)|\leq C(1+|t|^\sigma)$ for all $(x,t) \in \Omega \times \mathbb{R}$ and some $C,\sigma>0$ with $\sigma(N-p)\leq (p-1)N+p$,
	
		\item[(A2)] For every $x \in \Omega$ the map $t \mapsto \frac{g(x,t)}{t^{p-1}}$ is nonincreasing in $(0,\infty)$.\\
	
\end{enumerate}
Moreover, it is assumed that 

\begin{enumerate}
	\item[(A3)] Any nonnegative critical point of $I$ is continuous on $\overline{\Omega}$, and any two positive critical points $u,v$ of $I$ satisfy $\delta^{-1}v \leq u \leq \delta v$ in $\Omega$, for some $\delta \in (0,1)$.\\
\end{enumerate}

The next result follows from \cite[Theorem 4.1]{DFMST}:

\begin{theorem}\label{t1}
Under the above conditions on $h$ and $g$, let $A$ be the set of positive critical points of $I$. 
\begin{enumerate}
\item If $h>0$ in $(0,\infty)$ then $A \subset \{\alpha u_0: \alpha \geq 0\}$ for some $u_0 \in A$.
\item  If either $h$ is increasing or the map in $(A2)$ is decreasing, then  $A$ is at most a singleton.
\end{enumerate}
\end{theorem}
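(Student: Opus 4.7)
The plan is to invoke \cite[Theorem 4.1]{DFMST} by producing, for any two positive critical points $u, v \in A$, an admissible path along which $I$ is convex, with strictness tuned to yield each of the two conclusions. The natural candidate is the generalized Diaz-Saa path
\[
\gamma(t) = \bigl((1-t)u^p + t v^p\bigr)^{1/p}, \qquad t \in [0,1],
\]
which joins $u$ to $v$. Condition (A3) gives $\delta^{-1} v \le u \le \delta v$, so $\delta^{-1} u \le \gamma(t) \le \delta u$ uniformly in $t$. This keeps $\gamma(t) \in W_0^{1,p}(\Omega)$ away from degeneracy near $\partial \Omega$ and ensures that $\gamma'(0), \gamma'(1) \in W_0^{1,p}(\Omega)$ are admissible test functions. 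Consequently $f(t) := I(\gamma(t))$ is $C^1$ on $[0,1]$ with $f'(0) = \langle I'(u), \gamma'(0)\rangle = 0$ and, symmetrically, $f'(1) = 0$. Since a convex function on $[0,1]$ with vanishing endpoint derivatives must be constant, it will be enough to produce strict inequality $f(t) < (1-t) f(0) + t f(1)$ for some $t \in (0,1)$ to reach a contradiction.

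\textbf{Key computation: generalized hidden convexity.} Setting $\alpha = (1-t)u^p/\gamma^p$ and $\beta = t v^p/\gamma^p$ (so $\alpha + \beta = 1$), a direct differentiation gives $\nabla \log \gamma = \alpha \nabla \log u + \beta \nabla \log v$. Convexity of $\xi \mapsto |\xi|^p$ then yields the pointwise Lindqvist-type bound
\[
|\nabla \gamma(t)|^p \le (1-t)|\nabla u|^p + t|\nabla v|^p \quad \text{a.e. in } \Omega,
\]
with equality iff $u \nabla v = v \nabla u$ a.e. Since (H1) makes $h$ nonnegative and nondecreasing, $H$ is simultaneously nondecreasing and convex, so composing gives
\[
H(|\nabla \gamma(t)|^p) \le H\bigl((1-t)|\nabla u|^p + t|\nabla v|^p\bigr) \le (1-t) H(|\nabla u|^p) + t H(|\nabla v|^p),
\]
proving convexity of the kinetic part of $I$ along $\gamma$. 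For the potential, (A2) is equivalent (via $s = t^p$) to concavity of $s \mapsto G(x, s^{1/p})$ on $(0, \infty)$; since $\gamma(t)^p$ is affine in $t$, the map $t \mapsto -\int_\Omega G(x, \gamma(t))$ is convex, and $f$ is convex on $[0,1]$.

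\textbf{Strictness and the two cases.} For (1), if $h > 0$ then $H$ is strictly increasing, making the first inequality above strict at every point where $u \nabla v \ne v \nabla u$. If such a set has positive measure, $f$ is not affine, contradicting $f'(0) = f'(1) = 0$; hence $u \nabla v = v \nabla u$ a.e., which forces $v/u$ to be constant, so $v = \alpha u$ for some $\alpha > 0$. Fixing one $u_0 \in A$ yields $A \subset \{\alpha u_0 : \alpha \ge 0\}$. For (2), if $h$ is strictly increasing then $H$ is strictly convex, making the second inequality strict unless $|\nabla u| = |\nabla v|$ a.e.; combining this with the proportionality from part (1) gives $\alpha = 1$, i.e., $u = v$. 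Alternatively, if the map in (A2) is strictly decreasing, the potential term becomes strictly convex at every $x$ where $u(x) \ne v(x)$, and a set of positive measure of such $x$ again contradicts the endpoint criticality. The main obstacle in this plan is not the algebra of these inequalities, which is mechanical once the scheme is fixed, but rather the rigorous justification at the endpoints: differentiability of $f$ at $t = 0, 1$ and the identification $f'(0) = \langle I'(u), \gamma'(0)\rangle$ presuppose that $\gamma'(t)$ belongs to $W_0^{1,p}(\Omega)$ and that the chain rule passes through the integrals in \eqref{ei}, which in turn is exactly what (A3) purchases through the uniform two-sided comparison between $u$ and $v$.
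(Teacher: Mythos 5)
Your proposal is correct and follows essentially the same route as the paper (which defers to \cite[Theorem 4.1]{DFMST} and sketches precisely this argument): the path $\gamma_p(t)=((1-t)u^p+tv^p)^{1/p}$, hidden convexity of the kinetic term via $\nabla\log\gamma_p=\alpha\nabla\log u+\beta\nabla\log v$, concavity of $s\mapsto G(x,s^{1/p})$ from (A2), endpoint differentiability purchased by (A3), and the equality-case analysis separating conclusions (1) and (2). The only cosmetic point is that you inherit the paper's typo in (A3) (with $\delta\in(0,1)$ the comparison should read $\delta v\le u\le\delta^{-1}v$), and in case (2) with $h$ increasing one should note explicitly that $\nabla u\not\equiv 0$ (automatic for a positive element of $W_0^{1,p}(\Omega)$) to rule out $\alpha\neq 1$.
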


\subsection*{Notation and terminology}
Before proceeding, let us fix some terminology. We say that $f$ is a nondecreasing (respect. increasing) map on $\mathbb{R}$ if $f(t)\leq f(s)$ (respect. $f(t)< f(s)$) for any $t<s$. We say that $u \in C(\overline{\Omega})$ is nonnegative (respect. positive) if $u(x) \geq 0$ (respect. $u(x)>0$) for all $x \in \Omega$. If $u$ is a measurable function, inequalities involving $u$ are understood holding a.e., and integrals involving $u$ are considered with respect to the Lebesgue measure. A critical point is said nontrivial if it is not identically zero.
\\

Let us comment on the assumptions of Theorem \ref{t1}. First of all, (H1) and (A1)  ensure that $I$ is a $C^1$ functional whose critical points are weak solutions of $(P)$. 
Given $u,v \in W_0^{1,p}(\Omega)$ positive, set $$\gamma_p(t):=\left((1-t)u^p+tv^p)\right)^{\frac{1}{p}}, \quad t \in [0,1].$$
 Since $h$ is nondecreasing, the map $t \mapsto \int_\Omega H(|\nabla \gamma_p(t)|^p)$ is convex on $[0,1]$.\footnote{This property, known as (generalized) {\it hidden convexity}, plays an important role in several uniqueness results, and has connections with Hardy and Picone inequalities, as shown in \cite{BF}.} Moreover,   it is strictly convex if $u \not \equiv v$ and $h$ is increasing or the map in (A2) is decreasing, cf.  \cite[Lemma 4.3]{DFMST}.
 Still by (A2),  the map $t \mapsto G(x,t^{\frac{1}{p}})$ is concave on $[0,\infty)$
and consequently $I$ is strictly convex along $\gamma_p$. 
Finally, (A3) guarantees  that $\gamma_p$  is Lipschitz continuous at $t=0$ and $t=1$. It follows that if $I'(u)=I'(v)=0$ then $t \mapsto I(\gamma_p(t))$ is differentiable on $[0,1]$, with $t=0$ and $t=1$ as critical points, which is impossible for a strictly convex map.
The same contradiction arises  if $u,v$ are linearly independent and $h>0$ in $(0,\infty)$, as in this case $I$ is still strictly convex along $\gamma_p$.

\begin{remark}\label{r0}
\strut
\begin{enumerate}
	\item If $h \equiv 1$ then (A3) is a consequence of (A1) and (A2). Indeed, first note that $g(x,0) \geq 0$ in $\Omega$, since otherwise for some $x \in \Omega$ we have $\frac{g(x,t)}{t^{p-1}} \to -\infty$ as $t \to 0^+$, which is clearly impossible by (A2).
	We deduce that
	for any $t_0>0$ there exists $M>0$ such that $g(x,t)+Mt^{p-1} \geq 0$ in $\Omega \times [0,t_0]$, in which case the strong maximum principle and Hopf Lemma apply, cf. \cite{Va}.
	 In particular, the second assertion in (A3) is satisfied for any nonnegative critical points $u,v \not \equiv 0$, so that in this case Theorem \ref{t1} holds with $A$ redefined as the set of nontrivial nonnegative critical points of $I$. Note also that by (A1) any such point belongs to $C^1(\overline{\Omega})$, cf. \cite{db,L}.\\  
	
	\item When dealing with $I$ defined in $W^{1,p}(\Omega)$, Theorem \ref{t1}(2) needs the following modification:
	\begin{enumerate}
		\item If the map in $(A2)$ is decreasing then  $A$ is at most a singleton.
		\item If $h$ is increasing and $A$ contains a nonconstant element, then it is a singleton.
	\end{enumerate}
	 As a matter of fact, one may easily find some $g$ and $h$ satisfying (H1) and (A1)-(A3) with $h$ increasing and $g(x,u)\equiv 0$ for $u$ in some interval of positive constants, in which case $A$ has infinitely many elements.
	 The uniqueness result `modulo positive constants' in (b) is due to the fact that the map $t \mapsto \int_\Omega H(|\nabla \gamma_p(t)|^p)$ is identically zero and therefore not strictly convex if $u\not \equiv v$ are positive constants, see Remark \ref{r2} below.\\
\end{enumerate}
\end{remark}
Theorem \ref{t1} provides an alternative proof of some classical results for 
 \begin{equation}\label{p1}
\left\{
\begin{array}
[c]{lll}%
-\Delta_{p}u=g(x,u) & \mathrm{in} & \Omega,\\
u=0 & \mathrm{on} & \partial\Omega,\\
\end{array}
\right.  
\end{equation}
which corresponds to $(P)$ with $h \equiv 1$.
Indeed, in \cite[Examples 4.4 and 4.6]{DFMST} it is shown that
\begin{itemize}
\item $A$ is a singleton if $g(x,u)=u^{q-1}$ and $1<q<p$ (the subhomogeneous problem  \cite{DS}, or sublinear if $p=2$ \cite{BO}).
\item $A$ is  one-dimensional if $g(x,u)=\lambda_1u^{p-1}$ (the homogeneous or eigenvalue problem \cite{An}), where 
\begin{equation}\label{l1}
\lambda_1=\lambda_1(p):= \displaystyle \inf \left\{\int_\Omega |\nabla u|^p: u \in X, \int_\Omega |u|^p=1\right\}
\end{equation}
 is the first eigenvalue of the $p$-Laplacian on $X$ . In this case $A$ is the set of positive eigenfunctions associated to $\lambda_1$.\\
\end{itemize}
Theorem \ref{t1} also applies to \eqref{p1} if \begin{equation}\label{m}
g(x,u)=a(x)u^{q-1} \quad \mbox{with}\quad  1<q<p.
\end{equation}  and $a \in C(\overline{\Omega})$, $a \geq 0$, $\not \equiv 0$. On the other hand, it is clear that (A2) fails if $a$ is negative in some part of $\Omega$. In this case it is known that $(P)$ may have dead core solutions, i.e. solutions vanishing in some open subset of $\Omega$ \cite[Proposition 1.11]{D}, as well as solutions reaching some part of the boundary with null normal derivative \cite[Proposition 2.9]{KRQU16}, which shows that (A3) is not satisfied either.

Our first purpose is to deal with a class of nonlinearities including \eqref{m} with $a$  sign-changing. More precisely, instead of (A2) we shall assume that\\

\begin{enumerate}
	\item [(A2')] There exists $q \in (1,p)$  such that for a.e. $ x \in \Omega$ the map  $t \mapsto \frac{g(x,t)}{t^{q-1}}$ is nonincreasing in $(0,\infty)$.\\
\end{enumerate}

We consider the functional $I$ defined either in $W_0^{1,p}(\Omega)$ or $W^{1,p}(\Omega)$.  To overcome (A3) we restrict ourselves to strongly positive critical points, i.e. critical points lying in \[
\mathcal{P}^{\circ}:=\left\{
\begin{array}
[c]{ll}%
\left\{  u\in C_{0}^{1}(\overline{\Omega}):u>0\ \mbox{in $\Omega$},\ \partial
_{\nu}u<0\ \mbox{on $\partial \Omega$}\right\}   &
\mbox{if $X=W_0^{1,p}(\Omega)$},\medskip\\
\left\{  u\in C^{1}(\overline{\Omega}%
):u>0\ \mbox{on $\overline{\Omega}$}\right\}   &
\mbox{if $X=W^{1,p}(\Omega)$},
\end{array}
\right.
\]
 where $\nu$ is the outward unit normal to $\partial\Omega$. Such restriction is not  technical, as $I$ may have multiple nonnegative critical points if $h \equiv 1$ and $g$ is given by \eqref{m} with $a$ sign-changing, cf. \cite[Remark 1.3(3)]{KRQUpp2}. See also \cite{bandle, BPT} and \cite[Theorem 1.4(ii)]{NoDEA} and
 \cite[Proposition 5.1]{KRQU3} for $p=2$. On the other hand, $I$ has at most one positive critical point for such $h$ and $g$, cf. \cite{BPT,DS} for $p=2$ and \cite{KRQUpp} for $p>1$. Let us note that  a condition similar to (A2') with $p=2$ appears  in \cite{DS}.

Let $X=W_0^{1,p}(\Omega)$. In association with (A2'), we shall  consider the path
\begin{equation}\label{eg}
\gamma_q(t):=\left((1-t)u^q+tv^q)\right)^{\frac{1}{q}}, \quad t \in [0,1],
\end{equation}
to connect two critical points $u,v \in \mathcal{P}^{\circ}$. This path has been mostly used when $q=p$, but for $q<p$ one can find the expression $\gamma_q(1/2)=\left(\frac{u^q+v^q}{2}\right)^{\frac{1}{q}}$ in uniqueness arguments for sublinear or subhomogeneous type problems \cite{KRQUpp,KRQUpp2,KLP,N}. Note that $\gamma_q$ is  Lipschitz continuous at $t=0$ for such $u,v$ \cite[Corollary 3.3]{DFMST}.  By Lemma \ref{l1} below,  if $h>0$ in $(0,\infty)$ and $u \not \equiv v$ then $t \mapsto \int_\Omega H(|\nabla \gamma_q(t)|^p)$ is stricly convex on $[0,1]$. (A2') provides then the strict convexity of $t \mapsto I(\gamma_q(t))$. 
Furthermore, when dealing with global minimizers of $I$, uniqueness holds not only within $\mathcal{P}^{\circ}$, but more generally among nonnegative functions. For such $u,v$ the path
$\gamma_q$ is not necessarily Lipschitz continuous at $t=0$, yet $I$ is stricly convex along $\gamma_q$ for $u \not \equiv v$. In particular, the inequality \begin{equation}\label{ii}
I(\gamma_q(t))<(1-t)I(u)+tI(v)
\end{equation}
holds for any $t \in (0,1)$, and
the global minimality of $I(u)=I(v)$ eventually yields a contradiction. 
Lastly, let us mention that if $X=W^{1,p}(\Omega)$ then the above arguments hold assuming moreover that either $u$ or $v$ is nonconstant.

Our second purpose is to deal with problems involving nonhomogeneous operators, as $-\Delta_p-\Delta_r$ with $1<p<r$, in which case the associated functional is
$$I(u)=\int_\Omega \left( \frac{1}{p}|\nabla u|^p + \frac{1}{r}|\nabla u|^r -G(x,u)\right),$$
defined on $W^{1,r}(\Omega)$ or $W_0^{1,r}(\Omega)$. Some uniqueness results have been proved in \cite{BT,KST,MP,T} for this class of problems under Dirichlet boundary conditions. Note that this functional corresponds to \eqref{ei} with $h(t)=1+t^{\frac{r}{p}-1}$. 

More generally, we shall assume that\\
\begin{enumerate}
	\item[(H1')] $h:[0,\infty) \to [0,\infty)$ is continuous, nondecreasing, and $h(t)\leq C( 1+t^{\frac{r}{p}-1})$ for some $C>0$, $r>p$, and any $t \geq 0$,\\
\end{enumerate}
and, instead of $(A1)$, \\
\begin{enumerate}
	\item[(A1')] $|g(x,t)|\leq C(1+|t|^\sigma)$ for all $(x,t) \in \Omega \times \mathbb{R}$ and some $C,\sigma>0$ with  $\sigma(N-r)\leq (r-1)N+r$.\\
\end{enumerate}

It is clear that $I$ given by \eqref{ei} is a $C^1$ functional on $W^{1,r}(\Omega)$ under (H1') and (A1'), and these conditions are weaker than (H1) and (A1), respectively.
Finally, in view of Remark \ref{r0}(1) we strengthen $(A3)$ as follows:\\

\begin{enumerate}
	\item[(A3')] Any nonnegative critical point of $I$ is continuous on $\overline{\Omega}$, and any two such points $u,v \not \equiv 0$  satisfy $0<\delta^{-1}v \leq u \leq \delta v$ in $\Omega$, for some $\delta \in (0,1)$.\\
\end{enumerate}


The next subsection contains the statement of our results, which are proved in section 2. We apply our results to a few particular problems and recall some known uniqueness results  in section 3.\\

\subsection{Statement of our results}
From now on $I$ is the functional given by \eqref{ei}, and $A$ and $B$ are the sets of nontrivial nonnegative critical points and nonnegative global minimizers of $I$, respectively.

\begin{theorem}\label{tp}
	Let $h$ satisfy $(H1)$ and $h>0$ in $(0,\infty)$, and $g:\Omega \times \mathbb{R} \to \mathbb{R}$ be a Carath\'eodory function satisfying $(A1)$ and $(A2')$. 
\begin{enumerate}
	\item If $X=W_0^{1,p}(\Omega)$ then $A \cap \mathcal{P}^{\circ}$ and $B$ are at most a singleton.
	\item If $X=W^{1,p}(\Omega)$ and $A \cap \mathcal{P}^{\circ} $ contains a nonconstant element then it is a singleton. The same conclusion holds for $B$.
\end{enumerate}	
\end{theorem}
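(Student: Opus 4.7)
The plan is to adapt the path-of-convexity strategy described in the introduction, now using the path $\gamma_q(t):=((1-t)u^q+tv^q)^{1/q}$ with $q\in(1,p)$ provided by (A2'), and to distinguish the two classes $A\cap\mathcal{P}^{\circ}$ and $B$ only at the endpoints of $[0,1]$. Given two nontrivial nonnegative elements $u,v$ in the relevant class, and assuming in the $W^{1,p}(\Omega)$ case that at least one of them is nonconstant, the aim is to derive $u\equiv v$ from the strict convexity of $f(t):=I(\gamma_q(t))$ on $[0,1]$.

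The first step is to establish this strict convexity. By Lemma \ref{l1} below, which extends the classical hidden convexity from $q=p$ to the case $q<p$, the hypothesis $h>0$ on $(0,\infty)$ together with $u\not\equiv v$ (and, in the $W^{1,p}$ case, the nonconstancy of at least one of $u,v$) implies that $t\mapsto \int_\Omega H(|\nabla\gamma_q(t)|^p)$ is strictly convex on $[0,1]$. For the potential part, the substitution $s=t^q$ combined with (A2') shows that $s\mapsto G(x,s^{1/q})$ is concave on $[0,\infty)$, hence $t\mapsto -\int_\Omega G(x,\gamma_q(t))$ is convex along $\gamma_q$; adding the two contributions, $f$ is strictly convex on $[0,1]$.

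To handle $A\cap\mathcal{P}^{\circ}$, I would use that $u,v\in\mathcal{P}^{\circ}$ forces the ratios $u/v$ and $v/u$ to be bounded on $\Omega$ (via Hopf's lemma in the $W_0^{1,p}$ case, and directly in the $W^{1,p}$ case), so by \cite[Corollary 3.3]{DFMST} the path $\gamma_q$ is Lipschitz continuous into $X$ and, in particular, differentiable at $t=0$ and $t=1$. Since $I\in C^1(X)$ and $I'(u)=I'(v)=0$, the composition $f$ is then differentiable at the endpoints with $f'(0)=f'(1)=0$, which contradicts the strict convexity unless $u\equiv v$.

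For $B$ the endpoint regularity of $\gamma_q$ is no longer required, which is fortunate since nonnegative global minimizers need not belong to $\mathcal{P}^{\circ}$. Strict convexity already yields the interior inequality $I(\gamma_q(t))<(1-t)I(u)+tI(v)=\min I$ for every $t\in(0,1)$; since $\gamma_q(t)\geq 0$ lies in $X$, this contradicts the minimality of $I(u)=I(v)$, forcing $u\equiv v$. The main obstacle I anticipate is the strict convexity of the kinetic term $t\mapsto \int_\Omega H(|\nabla\gamma_q(t)|^p)$ for $q<p$, i.e.\ Lemma \ref{l1}: one must extend the classical hidden convexity argument past the exponent $q=p$ and carefully track the cases of equality, in order to ensure that $h>0$ on $(0,\infty)$ alone suffices to promote convexity to strict convexity under the stated hypotheses.
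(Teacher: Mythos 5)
Your proposal is correct and follows essentially the same route as the paper: the same path $\gamma_q$, the same appeal to Lemma \ref{l1} for strict convexity of the kinetic term, concavity of $s\mapsto G(x,s^{1/q})$ from (A2') for the potential term, endpoint Lipschitz continuity via comparability of elements of $\mathcal{P}^{\circ}$ (as in \cite[Lemma 3.4 and Corollary 3.3]{DFMST}) for the assertion on $A\cap\mathcal{P}^{\circ}$, and the interior strict convexity inequality \eqref{ii} alone for $B$. You also correctly isolate Lemma \ref{l1} as the main technical point, exactly as the paper does.
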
	

\begin{remark}
	\strut
	\begin{enumerate}
		
		\item As already observed, the assertions on $B$ rely essentially on \eqref{ii}, rather than the strict convexity of $I$ along $\gamma$. Even more, \eqref{ii} satisfied at some $t \in (0,1)$ suffices to reach a contradiction. This inequality with $t=1/2$ has been used in \cite{KRQUpp,KRQUpp2} to show that $B$ is a singleton if $h \equiv 1$ and $g(x,u)=\lambda u^{p-1}+a(x)u^{q-1}$, with $q\in (1,p)$, $\lambda \leq 0$ and $a$ sign-changing. \\
		
		\item One can extend the results on $B$ to deal with local minimizers as follows: assume that $C:=\{w \in X: I'(w)=0, I(w)=c\}$ contains a local minimizer $u$ (nonconstant if $X=W^{1,p}(\Omega$)) for some $c \in \mathbb{R}$. If for any $v \in C$ with $u \not \equiv v$ the inequaity \eqref{ii} holds for every $t \in (0,1)$, then $C=\{u\}$. This can be seen as a particular case of  \cite[Theorem 2.2]{DFMST}, without requiring $\gamma$ to be Lipschitz continuous at $t=0$.\\

		\item As discussed in Remark \ref{r0}(2), the assumption that $A \cap \mathcal{P}^{\circ} $ contains a nonconstant element is natural when $X=W^{1,p}(\Omega)$. One can remove this condition assuming in addition that the map in $(A2')$ is decreasing, or that $g(x,c) \not \equiv 0$ for any $c>0$. Note also that if $X=W^{1,p}(\Omega)$ and $g$ satisfies $(A1)$, $(A2')$, and $g(x,c) \equiv 0$ for exactly one constant $c>0$, then $A \cap \mathcal{P}^{\circ} =\{c\}$. Indeed, in such case there is no other positive constant solving $(P)$, and by Theorem \ref{tp} we deduce that $I$ has no further critical point in $A \cap \mathcal{P}^{\circ}$. This situation occurs for instance if $g(x,u)=u^{q-1}-u^{r-1}$ with $q \in (1,p)$ and $r>q$, in which case $A \cap \mathcal{P}^{\circ} =\{1\}$.\\
		
	\item Under the assumptions of Theorem \ref{tp} the sets $A \cap \mathcal{P}^{\circ}$ and $B$ may be empty, as we shall see in section 3. However, when $X=W_0^{1,p}(\Omega)$ the set $B$ is nonempty if we assume moreover that $h$ is bounded away from zero and $g(x,t)$ is odd with respect to $t$ or $g(x,t)=0$ for $t<0$. Indeed, from $(A2')$ we deduce that $G(x,t) \leq C_2(1+|t|^q)$ for some $C_2>0$ and every $t \in \mathbb{R}$, so that
	$I$ is coercive, weakly lower-semicontinuous and therefore has a global minimum on $W_0^{1,p}(\Omega)$. Finding sufficient conditions to have $A \cap \mathcal{P}^{\circ}$ nonempty is a more delicate issue.  Such conditions have been provided for $g(x,u)=\lambda u^{p-1}+a(x)u^{q-1}$, with $1<q<p$, $\lambda \leq 0$ and $a$ sign-changing in \cite{KRQUpp,KRQUpp2}, see section 3.
	\end{enumerate}		
\end{remark}

Next we consider $h$ and $g$ under (H1') and (A1'). First we observe that \cite[Theorem 4.1]{DFMST} can be easily extended to this situation. Let us recall that $I$ given by \eqref{ei} acts now on $W^{1,r}(\Omega)$ or $W_0^{1,r}(\Omega)$.

\begin{theorem}\label{tp1}
	Let $h$ satisfy {\rm (H1')}, and
	$g:\Omega \times \mathbb{R} \to \mathbb{R}$ be a Carath\'eodory function satisfying {\rm (A1'), (A2)}, and {\rm (A3')}.
	\begin{enumerate}
		\item If $h>0$ in $(0,\infty)$ then $A \subset \{\alpha u_0: \alpha \geq 0\}$ for some $u_0 \in A$.
		\item  If the map in $(A2)$ is decreasing then  $A$ is at most a singleton.
		\item If $h$ is increasing and $A$ contains a nonconstant element, then it is a singleton.
	\end{enumerate}
\end{theorem}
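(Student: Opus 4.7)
The plan is to reproduce the path-convexity argument from \cite[Theorem 4.1]{DFMST} described right after Theorem \ref{t1}, adapted to the $W^{1,r}(\Omega)$ framework. Given two candidates $u, v \in A$, consider
\[
\gamma_p(t) := \bigl((1-t) u^p + t v^p\bigr)^{1/p}, \qquad t \in [0,1],
\]
and show that $\varphi(t) := I(\gamma_p(t))$ is differentiable and strictly convex on $[0,1]$ with $\varphi'(0) = \varphi'(1) = 0$, which is impossible unless the strict convexity degenerates. This degeneracy produces $u \equiv v$ in cases (2) and (3), while in (1) it only forces linear dependence of $u, v$, giving $A \subset \{\alpha u_0 : \alpha \geq 0\}$.

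Three items need verification. First, $\gamma_p(t) \in W^{1,r}(\Omega)$ with $\varphi(t)$ finite: by definition $u, v \in W^{1,r}(\Omega)$, and (A3') gives $u, v \in C(\overline\Omega)$ with $0 < \delta^{-1}v \leq u \leq \delta v$, so $\gamma_p(t)$ is uniformly bounded and bounded away from zero. The chain rule yields $|\nabla \gamma_p(t)| \leq C(|\nabla u| + |\nabla v|)$, placing $\gamma_p(t)$ in $W^{1,r}(\Omega)$, and finiteness of $\varphi(t)$ follows from the bound $H(s) \leq C(s + s^{r/p})$ coming from (H1') together with (A1'). Second, the generalized hidden convexity of \cite[Lemma 4.3]{DFMST} uses only that $h \geq 0$ is continuous and nondecreasing, hence $t \mapsto \int_\Omega H(|\nabla \gamma_p(t)|^p)$ is convex on $[0,1]$, strictly so when $h>0$ and $u, v$ are linearly independent (case (1)), or when $h$ is increasing and $\gamma_p$ is nonconstant on a positive-measure set (case (3)). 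Concavity of $t \mapsto \int_\Omega G(x, \gamma_p(t))$ from (A2), strict when the map in (A2) is decreasing (case (2)), yields the required strict convexity of $\varphi$ in each case.

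The third item is differentiability of $\varphi$ at $t = 0, 1$ with vanishing derivative: this requires Lipschitz continuity of $t \mapsto \gamma_p(t)$ into $W^{1,r}(\Omega)$ at the endpoints, which follows from the uniform pointwise control $\delta^{-1}v \leq u \leq \delta v$ via the same argument as \cite[Corollary 3.3]{DFMST}, with $L^r$ integrability of $\nabla u, \nabla v$ in place of $L^p$. Combined with $u, v$ being critical points of the $C^1$ functional $I$ on $W^{1,r}(\Omega)$, this forces $\varphi'(0) = \varphi'(1) = 0$, and strict convexity of $\varphi$ produces the desired contradiction. The principal technical obstacle is to extend \cite[Corollary 3.3]{DFMST} from $W^{1,p}$ to $W^{1,r}$; this should be routine since (A3') keeps $\gamma_p$ uniformly away from its degenerate values, so the Lipschitz estimate goes through verbatim with exponent $r$ replacing $p$.
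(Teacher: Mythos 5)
Your proposal is correct and follows essentially the same route as the paper, which itself simply invokes the proof of \cite[Theorem 4.1]{DFMST} with the path $\gamma_p$, replacing $W^{1,p}(\Omega)$ by $W^{1,r}(\Omega)$ and noting that (H1') and (A1') keep $I$ of class $C^1$ there, while the strict-convexity dichotomy (linear dependence in case (1), strictness of (A2) in case (2), $h$ increasing plus a nonconstant element in case (3), cf.\ Remark \ref{r2}) is exactly as you describe. The three verification points you single out (membership of $\gamma_p(t)$ in $W^{1,r}$ with $\varphi$ finite, hidden convexity, and endpoint Lipschitz continuity via (A3')) are precisely the ingredients the paper's sketch takes for granted.
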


The extension of Theorem \ref{tp} reads as follows:

\begin{theorem}\label{tp2}
	Let $h$ satisfy {\rm (H1')} and $h>0$ in $(0,\infty)$, and $g:\Omega \times \mathbb{R} \to \mathbb{R}$ be a Carath\'eodory function satisfying {\rm (A1')} and {\rm (A2')}.
	\begin{enumerate}
		\item If $X=W_0^{1,r}(\Omega)$ then $A \cap \mathcal{P}^{\circ}$ and $B$ are at most a singleton.
		\item If $X=W^{1,r}(\Omega)$ and $A \cap \mathcal{P}^{\circ} $ contains a nonconstant element then it is a singleton. The same conclusion holds for $B$.
	\end{enumerate}	
\end{theorem}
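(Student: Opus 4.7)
The plan is to reproduce the scheme used for Theorem \ref{tp}, verifying that each step still works in the $W^{1,r}(\Omega)$ framework under the weaker structural conditions (H1') and (A1'). The convexity mechanism is identical, so the required modifications are almost entirely technical.

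Given two candidates $u,v$ in $A\cap\mathcal{P}^{\circ}$ (or in $B$) with $u\not\equiv v$, I would consider the path $\gamma_q(t)=((1-t)u^q+tv^q)^{1/q}$, $t\in[0,1]$, with $q\in(1,p)$ provided by (A2'). The first step is to check that $\gamma_q(t)\in W^{1,r}(\Omega)$ and that $t\mapsto I(\gamma_q(t))$ is well defined on $[0,1]$. Using the chain rule, $|\nabla\gamma_q(t)|$ is controlled pointwise by a combination of $|\nabla u|$ and $|\nabla v|$, so it lies in $L^{r}(\Omega)$. The inequality $H(s)\leq C(s+s^{r/p})$ which follows from (H1'), combined with (A1'), then yields finiteness of each of the two integrals in \eqref{ei}.

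The second step is the strict convexity of $I\circ\gamma_q$ on $[0,1]$ whenever $u\not\equiv v$. Lemma \ref{l1} together with $h>0$ in $(0,\infty)$ yields strict convexity of $t\mapsto\int_\Omega H(|\nabla\gamma_q(t)|^{p})$, and (A2') forces $t\mapsto G(x,t^{1/q})$ to be concave on $[0,\infty)$, hence $t\mapsto\int_\Omega G(x,\gamma_q(t))$ is concave. In the setting $X=W^{1,r}(\Omega)$ the gradient term degenerates when $u,v$ are both nonzero constants, which is precisely why the hypothesis that $A\cap\mathcal{P}^{\circ}$ (or $B$) contains a nonconstant element is needed in part (2), cf. Remark \ref{r0}(2).

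The final step is the derivation of a contradiction. For $u,v\in A\cap\mathcal{P}^{\circ}$, the inclusion $\mathcal{P}^{\circ}\subset C^{1}(\overline{\Omega})$ together with \cite[Corollary 3.3]{DFMST} ensures that $\gamma_q$ is Lipschitz continuous at $t=0$ and $t=1$; hence $I\circ\gamma_q$ is differentiable there with $(I\circ\gamma_q)'(0)=\langle I'(u),\gamma_q'(0)\rangle=0$ and similarly at $t=1$, which is incompatible with strict convexity on $[0,1]$. For $u,v\in B$, the strict inequality \eqref{ii} on $(0,1)$ directly contradicts the global minimality of $I(u)=I(v)$ and makes the Lipschitz control at the endpoints unnecessary. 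The main obstacle I anticipate is exactly the first, technical step: in the global-minimizer case $u$ and $v$ need not be $C^{1}$ or even bounded away from zero, so one has to verify somewhat carefully that $\gamma_q\in W^{1,r}(\Omega)$ and that $t\mapsto I(\gamma_q(t))$ is continuous on $[0,1]$; once this is in place, the convexity argument runs exactly as in the proof of Theorem \ref{tp}.
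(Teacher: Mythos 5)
Your proposal is correct and follows essentially the same route as the paper, which simply notes that $0\leq H(t)\leq C(1+t^{r/p})$ by (H1') and that inequality \eqref{gq} also holds with $p$ replaced by $r$ (so $\gamma_q(t)\in W^{1,r}(\Omega)$ and Lemma \ref{l1}(2) applies in the $W^{1,r}$ setting), and then repeats the proof of Theorem \ref{tp} verbatim. The technical points you flag at the end are exactly the two observations the paper makes, so there is no gap.
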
	

\begin{remark}
In the same way as in Theorem \ref{tp},  the condition that $A$ contains  a nonconstant element can be removed from Theorem \ref{tp1}(3) if the map in $(A2)$ is decreasing or  $g(x,c) \not \equiv 0$ for any $c>0$. A similar remark applies to $A \cap \mathcal{P}^{\circ}$ and $B$ in Theorem \ref{tp2}.
\end{remark}

Finally, let us mention that in view of their variational nature, Theorems \ref{tp}, \ref{tp1}, and \ref{tp2} can be adapted to obtain uniqueness results for
the equation 
$$-\mbox{div}\left(h(|\nabla u|^p)|\nabla u|^{p-2}\nabla u\right)=g(x,u) \quad \mbox{in} \quad \Omega$$
under nonlinear or mixed boundary conditions (as shown in \cite[Examples 4.7 and 4.8]{BF}).
We refer to \cite{MS} for similar results in the semilinear case $h \equiv 1$ and $p=2$.\\

\section{Proofs}
\medskip

The proofs are based on \cite[Theorem 1.1]{DFMST} and the following lemma, which sharpens the {\it general hidden convexity} established in \cite[Proposition 2.6]{BF} by analyzing the strict convexity of the map $t \mapsto \int_\Omega H(|\nabla \gamma_q(t)|^p)$. Moreover $H$ is not assumed to be homogeneous:
\begin{lemma}\label{l1}
	Let $1<q<p$, and $u,v \in W^{1,p}(\Omega)$ with  $u,v \geq 0$ in $\Omega$. \\
	\begin{enumerate}
		\item Let $\gamma_q$ be given by \eqref{eg}. Then \begin{equation}\label{gq}
		|\nabla \gamma_q(t)|^p \leq (1-t)|\nabla u|^p + t |\nabla v|^p \quad \forall t \in [0,1].
		\end{equation}
		Moreover, for $t \in (0,1)$ the strict inequality holds in the set\\
		\[
		\tilde{Z}(u,v):=\{x\in\Omega: u(x)\neq v(x),|\nabla u(x)|+|\nabla v(x)|>0\}.
		\]\\
		
		\item Let $h$ satisfy  $h>0$ in $(0,\infty)$ and (H1') . If $u,v \in X=W_0^{1,p}(\Omega)$ and $u \not \equiv v$ then the map $t \mapsto \int_\Omega H(|\nabla \gamma_q(t)|^p)$ is stricly convex on $[0,1]$. In particular,\\
		$$\int_\Omega H(|\nabla \gamma_q(t)|^p) \leq (1-t)\int_\Omega H(|\nabla u|^p)+t\int_\Omega H(|\nabla v|^p) \quad \forall t \in [0,1],$$
		with strict inequality for $t \in (0,1)$.
		The same assertions hold for $X= W^{1,p}(\Omega)$ assuming in addition that either  $u$ or $v$ is nonconstant.
	\end{enumerate}
\end{lemma}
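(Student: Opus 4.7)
The proof of (1) will combine three pointwise bounds. First, formally differentiating the identity $\gamma_q(t)^q=(1-t)u^q+tv^q$ yields, wherever $\gamma_q(t)>0$,
\[
\gamma_q(t)^{q-1}\nabla\gamma_q(t)=(1-t)u^{q-1}\nabla u+tv^{q-1}\nabla v,
\]
and the triangle inequality then bounds $\gamma_q(t)^{q-1}|\nabla\gamma_q(t)|$ by $(1-t)u^{q-1}|\nabla u|+tv^{q-1}|\nabla v|$. Second, H\"older's inequality with conjugate exponents $p$ and $p'=p/(p-1)$ and weights $(1-t),t$ bounds this last expression by
\[
\bigl((1-t)u^{p'(q-1)}+tv^{p'(q-1)}\bigr)^{1/p'}\bigl((1-t)|\nabla u|^p+t|\nabla v|^p\bigr)^{1/p}.
\]
Third, the hypothesis $q<p$ gives $p'(q-1)/q=p(q-1)/((p-1)q)<1$, so $\xi\mapsto\xi^{p'(q-1)/q}$ is strictly concave on $[0,\infty)$ and Jensen's inequality yields
\[
(1-t)u^{p'(q-1)}+tv^{p'(q-1)}=(1-t)(u^q)^{p'(q-1)/q}+t(v^q)^{p'(q-1)/q}\leq\gamma_q(t)^{p'(q-1)}.
\]
Chaining the three bounds and cancelling $\gamma_q(t)^{q-1}$ produces \eqref{gq}; the sets $\{u=v=0\}$, $\{u=0\}$, and $\{v=0\}$ are handled via Stampacchia's theorem ($\nabla w=0$ a.e.\ on $\{w=0\}$) together with the Sobolev chain rule. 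For the strict inequality on $\tilde{Z}(u,v)$, I would track strictness in the Jensen step, which is strict exactly when $u(x)\neq v(x)$; the extra requirement $|\nabla u|+|\nabla v|>0$ in the definition of $\tilde{Z}$ is needed because both sides of \eqref{gq} vanish when the two gradients do.

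For (2), the crucial observation is the reproduction identity
\[
\gamma_q\bigl((1-s)t_1+st_2\bigr)^q=(1-s)\,\gamma_q(t_1)^q+s\,\gamma_q(t_2)^q,\qquad t_1,t_2,s\in[0,1],
\]
which follows directly from the definition of $\gamma_q$. Applying (1) to the pair $\gamma_q(t_1),\gamma_q(t_2)$ therefore gives
\[
\bigl|\nabla\gamma_q\bigl((1-s)t_1+st_2\bigr)\bigr|^p\leq(1-s)|\nabla\gamma_q(t_1)|^p+s|\nabla\gamma_q(t_2)|^p;
\]
since (H1') makes $H$ convex and nondecreasing, composing with $H$ and integrating yields convexity of $t\mapsto\int_\Omega H(|\nabla\gamma_q(t)|^p)$ on $[0,1]$. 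For strict convexity, the assumption $h>0$ on $(0,\infty)$ makes $H$ strictly increasing on $[0,\infty)$, so it suffices to show that $\tilde{Z}(\gamma_q(t_1),\gamma_q(t_2))$ has positive measure whenever $t_1\neq t_2$.

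This measure-positivity is the main difficulty. For $t_1\neq t_2$ a direct computation gives $\{\gamma_q(t_1)\neq\gamma_q(t_2)\}=\{u\neq v\}$, which has positive measure by hypothesis. To rule out the degenerate scenario $\nabla\gamma_q(t_1)=\nabla\gamma_q(t_2)=0$ a.e.\ on this set, I would use the gradient formula from (1) at $t_1$ and $t_2$: the resulting $2\times 2$ linear system for $u^{q-1}\nabla u$ and $v^{q-1}\nabla v$ has nonzero determinant $t_2-t_1$, whence $u^{q-1}\nabla u=v^{q-1}\nabla v=0$ a.e.\ on $\{u\neq v\}$, and Stampacchia's theorem then forces $\nabla u=\nabla v=0$ a.e.\ there. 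Combined with $\nabla(u-v)=0$ a.e.\ on $\{u=v\}$ (Stampacchia applied to $u-v$), this yields $\nabla(u-v)\equiv 0$ a.e.\ in $\Omega$, so $u-v$ is (locally) constant. In $W_0^{1,p}(\Omega)$ this constant must vanish, contradicting $u\not\equiv v$; in $W^{1,p}(\Omega)$ a nonzero constant would force $\{u=v\}=\emptyset$ and hence $\nabla u=0$ a.e.\ in $\Omega$, making both $u$ and $v$ constant, contradicting the nonconstant hypothesis.
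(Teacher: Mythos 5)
Your proof is correct and follows the same overall strategy as the paper: a pointwise hidden-convexity inequality for part (1), then for part (2) the reproduction identity $\gamma_q((1-s)t_1+st_2)^q=(1-s)\gamma_q(t_1)^q+s\gamma_q(t_2)^q$ combined with the monotonicity and convexity of $H$, with the degenerate case $\nabla\gamma_q(t_1)=\nabla\gamma_q(t_2)=0$ a.e.\ on $\{u\neq v\}$ ruled out by forcing $u-v$ to be constant and invoking the boundary condition (or the nonconstancy assumption). The only real difference is in (1): the paper delegates the pointwise inequality to the concavity of $(z_1,z_2)\mapsto qz_1^{1-1/q}z_2^{1/p}$ from \cite[Lemma 3.5]{DFMST}, whereas you unpack this into an explicit chain rule, weighted H\"older, and Jensen computation --- a correct and self-contained substitute, modulo the standard approximation argument needed to justify the chain rule for $s\mapsto s^{1/q}$ near the zero set of $\gamma_q(t)$, a technicality the paper likewise leaves to the cited lemma.
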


\begin{proof}\strut
	\begin{enumerate}
		\item  It is enough to prove \eqref{gq} in the  set
		\[
		Z=Z(u,v):=\{x\in\Omega: |\nabla u(x)|+|\nabla v(x)|>0\},
		\]
		since outside $Z$ one has
		$\nabla \gamma_q(t)=0$. 
		We claim that \cite[Lemma 3.5]{DFMST} applies with $Q(t)=t^q$ and $M(t)=t^p$. Indeed, in this case $F_1(t)=qt^{1-\frac{1}{q}}$ and $F_2(t)=t^{\frac{1}{p}}$. One may easily check that $F(z_1,z_2):=F_1(z_1)F_2(z_2)$ is concave in $[0,\infty) \times [0,\infty)$ and strictly concave in $(0,\infty) \times (0,\infty)$. Arguing as in the proof of (3.9) in \cite[Lemma 3.5]{DFMST} we find that
		$$F((1-t)z+t\bar{z})>(1-t)F(z)+tF(\bar{z})$$
		if $t \in (0,1)$, $z \in [0,\infty) \times [0,\infty)$, and $\bar{z} \in (0,\infty) \times (0,\infty)$ with $z \neq \bar{z}$.
		Applying this inequality to $z=(Q(u),M(|\nabla u|))$ and $\bar{z}=(Q(v),M(|\nabla v|))$ 	we infer that \eqref{gq} holds with strict inequality in $\tilde{Z}(u,v)$.\\
		\item  Let $t,s, \alpha \in [0,1]$. Note that
		$$\gamma_q((1-\alpha)t+\alpha s)=\left((1-\alpha) \gamma_q(t)^q+\alpha  \gamma_q(s)^q\right)^{\frac{1}{q}},$$
		so applying \eqref{gq} with $u,v$ replaced by $\gamma_q(t), \gamma_q(s)$ respectively, we find that\\
		$$ |\nabla \gamma_q((1-\alpha)t+\alpha s)|^p \leq (1-\alpha)  |\nabla \gamma_q(t)|^p+\alpha |\nabla \gamma_q(s)|^p,$$
		with strict inequality in $\tilde{Z}(\gamma_q(t),\gamma_q(s))$.
		Since $H$ is nondecreasing and convex, we have\\
		$$\int_\Omega H(|\nabla \gamma_q((1-\alpha)t+\alpha s)|^p) \leq (1-\alpha) \int_\Omega H(|\nabla \gamma_q(t)|^p)+\alpha \int_\Omega H(|\nabla \gamma_q(s)|^p).$$
		\\
		\noindent Assume by contradiction that equality holds for some $t\neq s$ and $\alpha \in (0,1)$. Note that $h>0$ yields that $H$ is increasing in $(0,\infty)$, so that such equality holds only if $\tilde{Z}(\gamma_q(t),\gamma_q(s))$ is null. It follows that $\nabla \gamma_q(t)\equiv \nabla  \gamma_q(s)$, so 
		$\gamma_q(t)\equiv \gamma_q(s) +c$ for some $c \in \mathbb{R}$. If $X=W_0^{1,p}(\Omega)$ then $c=0$ and $u\equiv v$. If $X=W^{1,p}(\Omega)$ then we can assume that $c>0$, i.e. $\gamma_q(t)> \gamma_q(s)$. 
		It follows that $|\nabla \gamma_q(t)|+|\nabla  \gamma_q(s)|=0$ a.e., so that $\gamma_q(t), \gamma_q(s)$ are constant functions. Consequently $u,v$ are constant, and we reach a contradiction.
	\end{enumerate}	
\end{proof}

\begin{remark}\label{r2}
\strut
\begin{enumerate}
\item Lemma \ref{l1}(1) has been proved for $t=1/2$ in  \cite{KLP}. Using this inequality the authors show that the set of nonnegative minimizers for the Rayleigh quotient
$\frac{\int_\Omega |\nabla u|^p}{\left(\int_\Omega a(x)|u|^q\right)^{\frac{p}{q}}}$ in $W_0^{1,p}(\Omega)$ is one-dimensional if $1<q\leq p$, with $a$ allowed to change sign.\\

\item As shown in \cite[Lemmas 3.9 and 4.3]{DFMST}, the inequality \eqref{gq} and the convexity of $t \mapsto \int_\Omega H(|\nabla \gamma_q(t)|^p)$ still hold for $q=p$. However, this map is strictly convex if either $h>0$ and $u,v$ are linearly independent or $h$ is increasing and $u \not \equiv v$, with one of them nonconstant when $X=W^{1,p}(\Omega)$. Indeed, as shown in the proof of \cite[Lemma 4.3]{DFMST}(2), if $t \mapsto \int_\Omega H(|\nabla \gamma_p(t)|^p)$ is not strictly convex then $u,v$ are linearly dependent, and for $h$ increasing we deduce that $|\nabla \gamma_p(t_1)|=|\nabla \gamma_p(t_2)|$ for some $t_1\neq t_2$. It follows that $u \equiv v$ or $\nabla u\equiv \nabla v \equiv 0$, i.e. $u \equiv v$ or $u,v$ are positive constants.\\
\end{enumerate}	
\end{remark}

\begin{proof}[{\bf Proof of Theorem \ref{tp}}]
	Let us first prove the assertions on $A \cap \mathcal{P}^{\circ}$. We adapt the proof of \cite[Theorem 4.1]{DFMST}, which is based on \cite[Theorem 1.1]{DFMST}. Assume by contradiction that $u,v \in \mathcal{P}^{\circ}$ are critical points of $I$ with $u \not \equiv v$. First note that $\gamma_q$ is is locally Lipschitz at $t=0$ \cite[Corollary 3.3]{DFMST} since $u,v$ are comparable in the sense of (3.2), as shown in \cite[Lemma 3.4]{DFMST}.
	Assuming that $u$ is nonconstant if $X=W^{1,p}(\Omega)$, Lemma \ref{l1} yields that $t \mapsto \int_\Omega H(|\nabla \gamma_q(t)|^p)$ is stricly convex on $[0,1]$.  (A2') yields that $t \mapsto G(x,t^{\frac{1}{q}})$  is concave  on  $[0,\infty)$, so $t \mapsto I(\gamma_q(t))$ is strictly convex, and its derivative vanishes at $t=0$ and $t=1$, which is impossible.
	
	Let us now deal with $B$. Assume that $u,v \in B$ and $u \not \equiv v$ (with at least one of them  nonconstant when $X=W^{1,p}(\Omega)$). By Lemma \ref{l1} the map $t \mapsto I(\gamma_q(t))$ is stricly convex on $[0,1]$, so
	\eqref{ii} holds for any $t \in (0,1)$,
	which is clearly impossible since $u,v$ minimize $I$ globally. 
\end{proof}

\begin{proof}[{\bf Proof of Theorem \ref{tp1}}]
One can repeat the proof of \cite[Theorem 4.1]{DFMST} with the same path $\gamma_p$, and replacing $W^{1,p}(\Omega)$ by $W^{1,r}(\Omega)$. By (H1') and $(A1')$ the functional $I$ is $C^1$ on $W^{1,r}(\Omega)$. Note also that \cite[Lemma 4.3(ii)]{DFMST} holds in $W^{1,r}(\Omega)$ (or $W^{1,p}(\Omega)$) if we assume in addition that either $u$ or $v$ is nonconstant, cf. Remark \ref{r2}.
\end{proof}	

\begin{proof}[{\bf Proof of Theorem \ref{tp2}}]
	By (H1') we have $0\leq H(t) \leq C(1+t^{\frac{r}{p}})$ for some $C>0$, and any $t \geq 0$. Note  that \eqref{gq} holds also with $p$ replaced by $r$, so that Lemma \ref{l1}(2) holds in particular for $u,v \in X$, with $X=W^{1,r}(\Omega)$ or $X=W_0^{1,r}(\Omega)$. We can then repeat the proof of Theorem \ref{tp} to get the desired conclusions.
\end{proof}

\medskip
\section{Examples}
\medskip

Next we apply our results to some specific problems. Below we assume that $a,b \in L^{\infty}(\Omega)$, and $p^*$ is the critical Sobolev exponent, i.e. $p^*=Np/(N-p)$ if $p<N$ and $p^*=\infty$ if $p \geq N$.
\begin{enumerate}
\item First we apply Theorem \ref{tp} to $h \equiv 1$, i.e. to the problem \eqref{p1}. Let us observe that in the following cases the strong maximum principle and the Hopf lemma do not apply.\\
\begin{enumerate}
\item $g(x,u)=a(x)u^{q-1}$ with $q\in (1,p)$ and $a $ sign-changing.\\

 Note that $g(x,c)\not \equiv 0$ for any constant $c>0$, so we deduce that $A \cap \mathcal{P}^{\circ}$ and $B$ are at most a singleton if $X=W^{1,p}(\Omega)$ or $X=W_0^{1,p}(\Omega)$.

 When $p=2$ and $a$ is smooth, the fact that $A \cap \mathcal{P}^{\circ}$ is at most a singleton has been proved  in \cite{BPT} for $X=H^1(\Omega)$, and in \cite{DS} for $X=H_0^1(\Omega)$. These results hold for classical solutions, and are proved via a change of variables and the maximum principle.
 In  \cite[Theorem 1.1]{KRQUpp} it is shown, for $p>1$, that $B$ is a singleton (assuming that $\int_\Omega a<0$ if $X=W^{1,p}(\Omega)$) and $A \cap \mathcal{P}^{\circ} \subset B$. It is also shown in \cite[Theorem 1.2 and Remark 2.10]{KRQUpp} that $A \cap \mathcal{P}^{\circ}$ is a singleton if either $q$ is close enough to $p$ or $X=W_0^{1,p}(\Omega)$ and $a^-$ is sufficiently small. When $a$ is too negative in some part of $\Omega$ every nonnegative critical point has a dead core \cite[Proposition 1.11]{D}, i.e. a region where it vanishes. In this situation no positive solution exists, so that $A \cap \mathcal{P}^{\circ}$ is empty, but $B$ is a singleton. Finally, both $A \cap \mathcal{P}^{\circ}$ and $B$ are empty when $\int_\Omega a>0$ and $X=W^{1,p}(\Omega)$. Indeed, in this case it is clear that the functional is unbounded from below (over the set of constant functions), and taking $u^{1-q}$ as test function we see that $\int_\Omega a<0$ if $u \in A \cap \mathcal{P}^{\circ}$.
 \\   
\item $g(x,u)=a(x)u^{q-1}+b(x) u^{r-1} $ with $q\in (1,p)$, $r\in (q,p^*)$,  $a$ sign-changing and $b \leq 0$, $\not \equiv 0$.\\

For such $g$ one may easily show that $I$ is coercive.  Moreover $g(x,c) \not \equiv 0$ for any constant $c>0$, so that $B$ is a singleton and $A \cap \mathcal{P}^{\circ}$ is at most a singleton.
 
Following the approach of \cite{BPT}, the fact that $A \cap \mathcal{P}^{\circ}$ is at most a singleton has been proved 
in \cite{alama} for $X=H^1(\Omega)$, $r=p=2$, and $b \equiv \lambda<0$. In \cite{KRQUpp2} the results of \cite{KRQUpp} were extended to any $r=p$ and $b \equiv \lambda<0$. In this case $B$ is a singleton \cite[Theorem 1.1]{KRQUpp}, whereas $A \cap \mathcal{P}^{\circ}$ is a singleton if either $q$ is close enough to $p$ or $a^-$ is sufficiently small  \cite[Theorem 1.6 and Proposition 4.6]{KRQUpp}, and it is empty if $a$ is negative enough in some part of $\Omega$  \cite[Theorem 1.8]{KRQUpp}.  When $X=W^{1,p}(\Omega)$, $\int_\Omega a<0$, and $q$ is close to $p$, the condition $\lambda \leq 0$ is optimal for the uniqueness in $A \cap \mathcal{P}^{\circ}$, as  $I$ has at least two critical points  in $\mathcal{P}^{\circ}$ for $\lambda>0$ small  \cite[Theorem 1.4]{KRQUpp}. Finally, it is not difficult to see that $A \cap \mathcal{P}^{\circ}$ and $B$ are empty if $\lambda>0$ is large enough.

Let us note that in \cite{Dpp} it is claimed that $B$ is a singleton for $r=2q$ and $b \equiv -1$, but we have not found a proof of this uniqueness result.\\
 
\item $g(x,u)=a(x)u^{q-1}+b(x) u^{r-1}$ with $1\leq r<q<p$,  $a^- \not \equiv 0$, and $b \geq 0$, $\not \equiv 0$.\\

In this case $B$ is a singleton if we assume that $\int_\Omega a<0$ when $X=W^{1,p}(\Omega)$. If $a$ is sign-changing then $A \cap \mathcal{P}^{\circ}$ is a singleton when $q$ is close enough to $p$ or  $X=W_0^{1,p}(\Omega)$ and $a^-$ is sufficiently small. Indeed, as mentioned in (1), in this case the equation $-\Delta_p u =a(x)u^{q-1}$ has exactly one solution in $\mathcal{P}^{\circ}$, and since $b\geq 0$ this solution is a subsolution for $-\Delta_p u=g(x,u)$. Since this problem has arbitrarily large supersolutions, we obtain a solution in $\mathcal{P}^{\circ}$ by the sub-supersolutions method, so that  $A \cap \mathcal{P}^{\circ}$ is a singleton.\\
\end{enumerate}
\item We apply now Theorems \ref{tp1} and \ref{tp2} to $h(t)=1+t^{\frac{r}{p}-1}$, with $1<p<r$. Note that for such $h$ and $g$ satisfying $(A1')$, critical points of $I$, defined on $W^{1,r}(\Omega)$ or $W_0^{1,r}(\Omega)$, belong to $C^1(\overline{\Omega})$, cf. \cite{L2}.\\

\begin{enumerate}
\item $g(x,u)=a(x)u^{p-1}-b(x)u^{q-1}$, with $q>p$ and $b \geq 0$. \\

Let us first assume $q<r^*$. The strong maximum principle and the Hopf lemma  \cite[Theorems 5.3.1 and 5.5.1]{PS} apply to $(P)$, so (A3') is satisfied. Note also that $g(x,c) \equiv 0$ for at most one constant $c>0$ (and in this case $a,b$ are linearly dependent).
By Theorem \ref{tp1} we deduce that $I$ has at most one nontrivial nonnegative critical point. Since $p<r$ and $b \geq 0$, it follows that $I$ is coercive, assuming if $X=W^{1,r}(\Omega)$ that $b \not \equiv 0$ or $\int_\Omega a<0$. It has a nontrivial global minimizer if $a(x) \geq \lambda_1(p)$, where $\lambda_1(p)$ is given by \eqref{l1}.
More generally, this result holds if  $\displaystyle \inf_{u \in X} \int_\Omega \left(|\nabla u|^p -a(x)|u|^p\right)<0$,  which is also a necessary condition to have $A \neq \emptyset$.
Now, for $q \geq r^*$ one may argue as in \cite[Example 4.9]{DFMST} to show that $A$ is a singleton if $a \equiv a_0 >\lambda_1(p)$ and $b \equiv b_0 > 0$. Note that if $X=W^{1,r}(\Omega)$ then $A=\{(a_0b_0^{-1})^{\frac{1}{q-p}}\ \}$.

For $a>\lambda_1(p)$, $X=W_0^{1,r}(\Omega)$, and $q=r$, the existence and uniqueness of a positive critical point has been observed  in \cite{BT,MP,T}.\\

\item $g(x,u)=a(x)u^{q-1}$ with $1<q<p$ and $a^+ \not \equiv 0$.\\

If $a \geq 0$ then the strong maximum principle and Hopf Lemma apply, so (A3') holds and we can apply Theorem \ref{tp1} to deduce that $A$ is at most a singleton. If $X=W_0^{1,r}(\Omega)$ then $I$ is clearly coercive and has a nontrivial global minimizer, so $A$ is a singleton. Now, if $X=W^{1,r}(\Omega)$ then integrating the equation we see that $A$ is empty.

The existence and uniqueness of a positive critical point has been proved in \cite{KST} for the Dirichlet problem with $N=1$ and $a$ a positive constant.

Assume now that $a$ changes sign. 
By Theorem \ref{tp2} the sets  $A \cap \mathcal{P}^{\circ}$ and $B$ are at most a singleton (note that no positive constant solves $(P)$). Since $q<p<r$, it is straightforward that $I$ is coercive and has a nontrivial global minimizer (so $B$ is a singleton), assuming in addition that $\int_\Omega a<0$ if $X=W^{1,r}(\Omega)$. \\
\end{enumerate}
\end{enumerate}



\begin{thebibliography}{99}                                                                                               %




\bibitem {alama}S. Alama, \textit{Semilinear elliptic equations with sublinear
indefinite nonlinearities}, Adv. Differential Equations \textbf{4} (1999), 813--842.






\bibitem{An} A. Anane, {\it Simplicit\'e et isolation de la premi\`ere valeur propre du p-laplacien avec poids (French, with English summary)}, C. R. Acad. Sci. Paris Ser. I Math. 305 (1987), no. 16, 725--728.

\bibitem {bandle}C. Bandle, M. Pozio, A. Tesei, \textit{The asymptotic
behavior of the solutions of degenerate parabolic equations,} Trans. Amer.
Math. Soc. \textbf{303} (1987), 487--501.\textit{\ }

\bibitem {BPT}C.\ Bandle, M.\ Pozio, A.\ Tesei, \textit{Existence and
uniqueness of solutions of nonlinear Neumann problems},
Math.\ Z.\ \textbf{199} (1988), 257--278.





\bibitem {BT}V. Bobkov, M. Tanaka, {\it Remarks on minimizers for (p,q)-Laplace equations with two parameters.} Communications on Pure and Applied Analysis,  17(3), 1219--1253.

\bibitem{DFMST} D. Bonheure, J. Foldes, E. Moreira dos Santos, A. Salda\~na, H. Tavares, {\it Paths to uniqueness of critical points and applications to partial differential equations.} Trans. Amer. Math. Soc. 370 (2018), no. 10, 7081--7127.



\bibitem {BF}L. Brasco, G. Franzina, \textit{ Convexity properties of
Dirichlet integrals and Picone-type inequalities}, Kodai Math. J. \textbf{37}
(2014), 769--799.

\bibitem {BO}H. Brezis, L. Oswald, \textit{ Remarks on sublinear elliptic
equations,} Nonlinear Anal. \textbf{10} (1986), 55--64.




\bibitem {DSu}M. Delgado, A. Su\'{a}rez, \textit{On the uniqueness of positive
solution of an elliptic equation}, Appl. Math. Lett. \textbf{18} (2005), 1089--1093.

\bibitem{Dpp}J. I. Díaz, {\it New applications of monotonicity methods to a class of non-monotone parabolic quasilinear sub-homogeneous problems,} to appear in Journal Pure and Applied Functional Analysis. Special Issue dedicated to Haïm Brezis.

\bibitem {D}J. I. D\'{\i}az, \textit{Nonlinear Partial Differential Equations
and Free Boundaries. Vol.I. Elliptic equations.} Research Notes in Mathematics
\textbf{106}, Pitman, London, 1985, 323 pp.

\bibitem {DS}J. I. D\'{\i}az, J.E. Saa, \textit{Existence et unicit\'{e}\ de
solutions positives pour certaines \'{e}quations elliptiques
quasilin\'{e}aires. (French) [Existence and uniqueness of positive solutions
of some quasilinear elliptic equations] }C. R. Acad. Sci. Paris S\'{e}r. I
Math. \textbf{305} (1987), 521--524.

\bibitem {db}E. DiBenedetto, \textit{$C^{1+\alpha}$ local regularity of weak
	solutions of degenerate elliptic equations}, Nonlinear Anal. \textbf{7}
(1983), 827--850.



























\bibitem{KST} R. Kajikiya, I. Sim, S. Tanaka, {\it A complete classification of bifurcation diagrams for a class of (p,q)-Laplace equations.} J. Math. Anal. Appl. 462 (2018), no. 2, 1178--1194.

\bibitem {KRQU16}{U.\ Kaufmann, H.\ Ramos Quoirin, K.\ Umezu,
\textit{Positivity results for indefinite sublinear elliptic problems via a
continuity argument}, J.\ Differential Equations \textbf{263} (2017),
4481--4502. }

\bibitem {NoDEA}U.\ Kaufmann, H.\ Ramos Quoirin, K.\ Umezu{\small ,
}\textit{Positive solutions of an elliptic Neumann problem with a sublinear
indefinite nonlinearity}, NoDEA Nonlinear Differ. Equ. Appl. \textbf{25}
(2018), Art. 12, 34 pp.

\bibitem {KRQU3}U.\ Kaufmann, H.\ Ramos Quoirin, K.\ Umezu, \textit{A curve of
positive solutions for an indefinite sublinear Dirichlet problem}, Discrete
Contin. Dyn. Syst.\ \textbf{40} (2020), 617--645.




\bibitem {KRQUpp}U.\ Kaufmann, H.\ Ramos Quoirin, K.\ Umezu,
\textit{Uniqueness and sign properties of minimizers in a quasilinear
indefinite problem}, arXiv:2001.11318.

\bibitem {KRQUpp2}U.\ Kaufmann, H.\ Ramos Quoirin, K.\ Umezu,
\textit{Uniqueness and positivity issues in a quasilinear indefinite problem},  	arXiv:2007.09498.

\bibitem {KLP}{B.\ Kawohl, M.\ Lucia, S.\ Prashanth, \textit{Simplicity of the
principal eigenvalue for indefinite quasilinear problems}, Adv. Differential
Equations, \textbf{12} (2007), 407--434. }


\bibitem {L}G. M. Lieberman, \textit{Boundary regularity for solutions of
degenerate elliptic equations}, Nonlinear Anal. \textbf{12} (1988),
1203--1219.

\bibitem{L2}  G. M. Lieberman, {\it The natural generalization of the natural conditions od Ladyzhen-skaya and Uraltseva for elliptic equations,} Comm. Partial Differential Equations 16 (1991), 311--361.

\bibitem{MP} S. A. Marano, N. S. Papageorgiou  {\it Constant-sign and nodal solutions of coercive (p,q)- Laplacian problems}. Nonlinear Analysis: Theory, Methods and Applications, 77 (2013), 118--129.

\bibitem{MS} C. Morales-Rodrigo, A. Suárez, {\it Uniqueness of solution for elliptic problems with non-linear boundary conditions,} Comm. Appl. Nonlinear Anal. 13 (2006).

\bibitem{N} A.I. Nazarov,  {\it On the symmetry of extremals in the weight embedding theorem. Function theory and mathematical analysis.} J. Math. Sci. (New York) 107 (2001), no. 3, 3841--3859. 



\bibitem{PS} P. Pucci and J. Serrin, {\it The Maximum Principle}, Birkhauser, Basel, 2007.

\bibitem{T} M. Tanaka, {\it Uniqueness of a positive solution and existence of a sign-changing solution for (p,q)-Laplace equation}. Journal of Nonlinear Functional Analysis, 2014 (2014), 1--15.



\bibitem {Va}J.L. V\'{a}zquez, {\it A strong maximum principle for some quasilinear
elliptic equations.} \textit{Appl. Math. Optim}\emph{.} \textbf{12} (1984), 191--202.
\end{thebibliography}
\end{document}